\documentclass[11pt,fleqn]{article}
\usepackage{amssymb}
\usepackage{amsfonts}
\usepackage{amsmath}
\usepackage{amsthm}
\usepackage{graphicx}
\usepackage{epsfig}
\usepackage{psfrag}
\usepackage{color}
\usepackage{dsfont}
\makeatletter
\xdef\@endgadget#1{{\unskip\nobreak\hfil\penalty50\hskip1em\hbox{}\nobreak
    \hfil#1\parfillskip=0pt\finalhyphendemerits=0\par}}
\def\@qedsymbol{${}_\blacksquare$}
\def\qed{\@endgadget{\@qedsymbol}}
\newtheorem{lemma}{Lemma}[section]

\newtheorem{proposition}[lemma]{Proposition}
\newtheorem{remark}[lemma]{Remark}
\newcommand{\mR}{\mathbb{R}}

\newcommand{\cL}{\mathcal{L}}
\newcommand{\cH}{\mathcal{H}}

\newcommand{\bq}{\begin{equation}}
\newcommand{\eq}{\end{equation}}
\newcommand{\bma}{\begin{bmatrix}}
\newcommand{\ema}{\end{bmatrix}}

\def\BibTeX{{\rm B\kern-.05em{\sc i\kern-.025em b}\kern-.08em
    T\kern-.1667em\lower.7ex\hbox{E}\kern-.125emX}}

\title{\LARGE \bf Suboptimal control by primal-dual \\gradient dynamics}

\author{Arjan van der Schaft
\thanks{A.J. van der Schaft is with the Bernoulli Institute for Mathematics, Computer
Science and AI, Jan C. Willems Center for Systems and Control, University of Groningen, PO Box 407, 9700 AK, the
Netherlands,
        {\tt\small A.J.van.der.Schaft@rug.nl}}
}

\begin{document}

\maketitle
\thispagestyle{empty}
\pagestyle{empty}

\begin{abstract}
This note generalizes the port-Hamiltonian formulation of the continuous time primal-dual gradient algorithm for static constrained convex optimization to the convex optimal control problem.The resulting dynamics is shown to be a port-Hamiltonian system of partial differential equations, involving ordinary physical time as well 'algorithmic' time. Convergence to the optimal control solution is indicated, and it is argued that sub-optimal control strategies could be derived starting from the partial differential equation formulation.
\end{abstract}

\noindent
Keywords: Optimal control, port-Hamiltonian systems, convex optimization, primal-dual gradient algorithm

\section{Introduction}
The purpose of this note is to clarify the use of continuous time primal-dual gradient dynamics for obtaining sub-optimal control strategies. Inspired by \cite{gernandt}, the primal-dual gradient algorithm for convex optimal control is explicitly formulated as a system of partial differential equations. While it is difficult to claim full originality in this vast area, it seems that such a point of view has not yet been emphasized in the literature. Furthermore, the port-Hamiltonian perspective taken in this note sheds new light, especially when it comes to interconnection properties and stability analysis of the resulting dynamics. The note only contains basic observations and computations, which however could provide starting points for further mathematical and computational developments.

Point of departure is the use of primal-dual gradient dynamics in continuous time (PDGCT) for \emph{constrained convex optimization}; dating back at least to \cite{arrow}, with numerous subsequent developments. In \cite{stegink1, stegink2, passivitybook}, see also \cite{camlibel2}, it was realized that in the static case PDGCT defines an \emph{incremental port-Hamiltonian system}, as coined in \cite{camlibel1,camlibel2}. This was used for \emph{stability} analysis by using the shifted Hamiltonian as Lyapunov function, and for \emph{interconnection} purposes. In fact, in \cite{stegink1,stegink2} market dynamics modelled as PDGCT with additional inputs and outputs was coupled to the port-Hamiltonian formulation of power networks, leading to a closed-loop port-Hamiltonian dynamics for which the shifted total Hamiltonian is a Lyapunov function; thus proving overall stability. This point of view was subsequently extended to the infinite-dimensional case in \cite{gernandt}, see also \cite{preuster, lefevre}, introducing PDGCT as \emph{optimizer dynamics} to be coupled to other port-Hamiltonian systems. The present note aims at clarifying the precise connection of infinite-dimensional PDGCT with (sub-)optimal control. In particular, the equilibrium of infinite-dimensional PDGCT as described in \cite{gernandt} corresponds to the optimal control solution in case of linear dynamics with convex cost criterion. Furthermore, it is shown how infinite-dimensional PDGCT amounts to a system of partial differential equations (pde's), involving both 'physical' time (of the optimal control problem) and 'algorithmic' time (of the primal-dual gradient algorithm). This system of pde's exhibits two Hamiltonian structures. One is corresponding to Pontryagin's minimum principle, which defines a port-Hamiltonian system of differential-algebraic equations in the state, co-state, and input variables in physical time, see \cite{pHDAE}. The other is the port-Hamiltonian structure of infinite-dimensional PDGCT involving algorithmic time \cite{JGP}.

The explicit port-Hamiltonian pde formulation of PDGCT for optimal control as described in this paper has at least two potential applications. One is the interpretation of infinite-dimensional PDGCT as \emph{sub-optimal} control. Recently there has been much interest in rendering model predictive control (MPC) less computationally intensive by resorting to various sub-optimal versions; see \cite{rawlings} for a general MPC background. In \cite{yoshida} the use of PDGCT has been advocated for this purpose ('instant MPC'); however applying PDGCT to \emph{discrete time} MPC. Note in this respect that the port-Hamiltonian interpretation of PDGCT provides passivity guarantees; and thus there is no need for further dissipativity analysis as in \cite{yoshida}. Furthermore, the present paper suggests to start from the pde formulation of PDGCT for optimal control, and to derive computational schemes from there.
The other application concerns the interconnection to other port-Hamiltonian systems, either of physical or of optimization nature (distributed optimal control), and their stability analysis using shifted Hamiltonians. This point of view is already investigated in \cite{lefevre}, where control by interconnection of constrained port-Hamiltonian systems is approached by using PDGCT in order to handle state and input constraints, as well as in the recent papers \cite{gernandt, preuster} dealing with control by interconnection by 'optimizer dynamics'. 

\smallskip

The structure of this paper is as follows. Section 2 recalls the basics of PDGCT for static constrained convex optimization, and its (incremental) port-Hamiltonian formulation. Section 3 follows the same steps, but then within an infinite-dimensional optimal control context. First, it is discussed how convex optimal control, by interpreting the linear dynamics as constraints, leads to infinite-dimensional PDGCT. Then, it is shown how the resulting infinite-dimensional dynamics is given by a set of partial differential equations in incremental port-Hamiltonian form; thus explicating \cite{gernandt}. Using the shifted Hamiltonian with respect to the optimal control solution that is provided by Pontryagin's minimum principle convergence to the optimal control is indicated.

\section{Static constrained convex optimization}
In order to set the stage for the optimal control primal-dual gradient algorithm in continuous time in Section 3 we first recall the corresponding situation for the \emph{static} constrained convex optimization case. Consider the problem of minimizing a strictly convex and differentiable function $V:\mR^n \to \mR$ under affine constraints $Cq=b, q \in \mR^n,$ for a $k \times n$ matrix $C$ with rank $k$ and $k$-vector $b$. That is, consider the convex optimization problem
\bq
\min_{\{q \mid Cq=b\}} V(q).
\eq
Define the Lagrangian $L(q, \lambda) := V(q) + \lambda^\top (Cq-b)$, with $\lambda \in \mR^k$ a vector of Lagrange multipliers. Due to strict convexity of $V$ and full rank of $C$ there exists a unique pair $(q_*,\lambda_*)$ such that 
\bq
\label{eq:equilibriumstatic}
\frac{\partial L}{\partial q}(q_*, \lambda_*) =0, \; \frac{\partial L}{\partial \lambda}(q_*, \lambda_*) =0
\eq
(that is, $\frac{\partial V}{\partial q}(q_*) + C^\top \lambda_*=0,  Cq_*=b$),
and the solution to the minimization problem is given by $q_*$. 
A standard way to compute $(q_*,\lambda_*)$ is the \emph{primal-dual gradient algorithm}, where gradient \emph{descent} is taken with respect to $q$, and gradient \emph{ascent} with respect to the Lagrange multipliers $\lambda$. As is well-known, the primal-dual gradient algorithm is compromizing between minimization of $V$ and constraint satisfaction. The \emph{continuous time} version of this algorithm (PDGCT) is, cf. \cite{arrow},
\bq
\label{eq:primaldualstatic0}
\begin{array}{rcl}
\frac{d}{d \tau} Qq & = & - \frac{\partial L}{\partial q}(q, \lambda) \\[2mm]
\frac{d}{d \tau} \Lambda \lambda & = & \frac{\partial L}{\partial \lambda}(q, \lambda),
\end{array}
\eq
with $Q, \Lambda$ positive matrices determining the time-scales of the algorithm. 
As observed in \cite{stegink1,stegink2}, see also \cite{passivitybook}, PDGCT can be formulated as an incremental \emph{port-Hamiltonian system}. First define the new variables
$z:= Q q, \quad \mu:= \Lambda \lambda$.
In these so-called energy variables \eqref{eq:primaldualstatic0} takes the form
\bq
\label{eq:primaldualstatic}
\frac{d}{d \tau} \bma z \\[2mm] \mu \ema = \bma 0 & -C^\top \\[2mm] C & 0 \ema \bma q \\[2mm] \lambda \ema - \bma \frac{\partial V}{\partial q}(q) \\[2mm] b \ema, 
\eq
with $q= Q^{-1} z, \lambda= \Lambda^{-1} \mu$.
Then by defining the \emph{Hamiltonian} 
\bq
\label{eq:Hamiltonianstatic}
H(z,\mu):= \frac{1}{2} z^\top Q^{-1} z + \frac{1}{2} \mu^\top \Lambda^{-1} \mu,
\eq
and noting that $q=\frac{\partial H}{\partial z}(z,\mu), \lambda=\frac{\partial H}{\partial \mu}(z,\mu)$ are thus co-energy variables, it follows that \eqref{eq:primaldualstatic} is an incremental port-Hamiltonian system as coined and analyzed in \cite{camlibel1, camlibel2}. Indeed the mapping
\bq
\bma q \\[2mm] \lambda \ema \mapsto - \bma 0 & -C^\top \\[2mm] C & 0 \ema \bma q \\[2mm] \lambda \ema + \bma \frac{\partial V}{\partial q}(q) \\[2mm] b \ema
\eq
is a monotone mapping.

As a next step, define the \emph{shifted} Hamiltonian
\bq
\label{eq:Hamiltonianstaticshifted}
\widetilde{H}(z,\mu) = \frac{1}{2} (z - z_*)^\top Q^{-1} (z -z_*) + \frac{1}{2} (\mu - \mu_*)^\top \Lambda^{-1} (\mu - \mu_*),
\eq
where $z_*= Q  q_*, \mu_* = \Lambda \lambda_*,$ with $q_*,\lambda_*$ the unique solution of \eqref{eq:equilibriumstatic}. Then \eqref{eq:primaldualstatic} can be rewritten as the port-Hamiltonian system \cite{jeltsema, passivitybook}
\bq
\label{eq:primaldualstaticshifted}
\frac{d}{d \tau} \bma z \\[2mm] \mu \ema = \bma 0 & -C^\top \\[2mm] C & 0 \ema  
%\bma q \\[2mm] \lambda \ema 
\bma \frac{\partial \widetilde{H}}{\partial z}(z,\mu) \\[2mm] \frac{\partial \widetilde{H}}{\partial \mu}(z,\mu) \ema
- \bma \frac{\partial V}{\partial q}(q) - \frac{\partial V}{\partial q}(q^*)\\[2mm] 0 \ema. 
%\quad q= \frac{\partial \widetilde{H}}{\partial z}(z,\mu), \lambda = \frac{\partial \widetilde{H}}{\partial \mu}(z,\mu).
\eq
In particular one computes, using skew-symmetry of the first matrix on the right-hand side of \eqref{eq:primaldualstaticshifted},
\bq
\label{eq:staticdissipation}
\begin{array}{rcl}
\frac{d}{d\tau} \widetilde{H} & = &-(z-z_*)^\top Q^{-1} [\frac{\partial V}{\partial q}(q) - \frac{\partial V}{\partial q}(q_*)] \\[2mm]
&= & -(q-q_*)^\top [\frac{\partial V}{\partial q}(q) - \frac{\partial V}{\partial q}(q_*)] \leq 0,
\end{array}
\eq
where the inequality follows from convexity of $V$, and thus monotonicity of the map $q \mapsto \frac{\partial V}{\partial q}(q)$. In fact, the shifted Hamiltonian $\widetilde{H}$ equals the Lyapunov function proposed in \cite{arrow}, and thus the addition to the existing literature resides primarily in the port-Hamiltonian interpretation. 

The inequality \eqref{eq:staticdissipation} together with LaSalle's invariance principle implies convergence of the primal-dual gradient algorithm to $(q_*,\lambda_*)$. This well-known fact (in its basic form; see e.g. \cite{cherukuri} for extensions) is recalled here for completeness and as a precursor to Section 3.
\begin{proposition}\label{prop}
$\widetilde{H}$ is a Lyapunov function for the port-Hamiltonian system \eqref{eq:primaldualstaticshifted}, and for any initial condition $z(0)=Qq(0), \mu(0)= \Lambda \lambda(0)$, the system converges to $z_*= Q  q_*, \mu_* = \Lambda \lambda_*$. Hence PDGCT \eqref{eq:primaldualstatic0} converges to $q_*, \lambda_*$.
\end{proposition}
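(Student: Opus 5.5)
The plan is a Lyapunov/LaSalle argument built on the dissipation inequality \eqref{eq:staticdissipation}.

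First, $\widetilde{H}$ is a valid Lyapunov candidate. Since $Q,\Lambda$ are positive definite, $\widetilde{H}$ is positive definite and radially unbounded around $(z_*,\mu_*)$, and it vanishes only there. The point $(z_*,\mu_*)$ is an equilibrium of \eqref{eq:primaldualstaticshifted}, because the right-hand side vanishes by \eqref{eq:equilibriumstatic}. The computation \eqref{eq:staticdissipation} gives $\frac{d}{d\tau}\widetilde{H}\le 0$ along solutions. Hence $\widetilde{H}$ is a Lyapunov function and $(z_*,\mu_*)$ is stable. Moreover, every sublevel set $\{\widetilde{H}\le c\}$ is compact and forward invariant. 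So solutions from any initial condition are bounded and exist for all $\tau\ge 0$, since local existence holds by continuity and I will assume $\partial V/\partial q$ is locally Lipschitz (e.g.\ $C^1$).

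Second, I apply LaSalle's invariance principle on such a compact sublevel set. Every solution converges to the largest invariant set $M$ contained in
\[
E=\{(z,\mu)\mid (q-q_*)^\top[\tfrac{\partial V}{\partial q}(q)-\tfrac{\partial V}{\partial q}(q_*)]=0\}.
\]
By strict convexity, $\partial V/\partial q$ is strictly monotone, so $E=\{q=q_*\}$, i.e.\ $z=z_*$ with $\mu$ arbitrary.

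Third, I determine $M$. On a trajectory remaining in $E$, $q\equiv q_*$, so $\frac{d}{d\tau}z=0$. The first row of \eqref{eq:primaldualstaticshifted} then gives $-C^\top(\lambda-\lambda_*)=0$, and the gradient terms cancel. Since $C$ has full row rank $k$, $C^\top$ is injective, so $\lambda=\lambda_*$, that is, $\mu=\mu_*$. Hence $M=\{(z_*,\mu_*)\}$, and every trajectory converges to it. Transforming back via $q=Q^{-1}z$ and $\lambda=\Lambda^{-1}\mu$ gives convergence of \eqref{eq:primaldualstatic0} to $(q_*,\lambda_*)$. As a consistency check, the second row also gives $\frac{d}{d\tau}\mu=C(q_*-q_*)=0$ on $E$.

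The main obstacle is the LaSalle step, since $\dot{\widetilde{H}}$ is only negative semidefinite. It is handled by two facts working together. Strict monotonicity of the gradient reduces $E$ to $\{q=q_*\}$; mere convexity would not suffice here, and I would note this hypothesis explicitly. The rank condition on $C$ then pins down $\lambda$. A minor technical point is forward completeness and precompactness of trajectories, which is exactly what the compact invariant sublevel sets provide.
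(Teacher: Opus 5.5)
Your proposal is correct and follows the same route as the paper. The dissipation inequality together with strict convexity gives $E=\{z=z_*\}$, and LaSalle's principle then applies once the largest invariant set is reduced to $(z_*,\mu_*)$ via $C^\top(\lambda-\lambda_*)=0$ and the full row rank of $C$, which is the step the paper only calls ``easily seen''. Your added care about compact sublevel sets and forward completeness is welcome, but the local Lipschitz assumption is unnecessary: monotonicity of the right-hand side already gives uniqueness of solutions and a nonexpansive flow in the $Q^{-1},\Lambda^{-1}$-weighted norm.
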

{\it Proof}
Equation \eqref{eq:staticdissipation} together with strict convexity of $V$ implies $\{ (z,\mu) \mid \frac{d}{d \tau}{\widetilde{H}}=0 \} = \{(z,\mu) \mid z=z_* \}$. It is easily seen that the largest invariant set within this set is the single point $(z_*,\mu_*)$. Hence by LaSalle's invariance principle the port-Hamiltonian system \eqref{eq:primaldualstaticshifted} converges to $(z_*,\mu_*)$, and thus the primal-dual gradient algorithm \eqref{eq:primaldualstatic} to $(q_*,\lambda_*)$.
${}_\blacksquare$

\smallskip

The incremental port-Hamiltonian system formulation \eqref{eq:primaldualstatic} of the PDCCT algorithm can be augmented with conjugated \emph{inputs} $u$ and \emph{outputs} $y$, leading to 
\bq
\label{eq:primaldualstaticuy}
\begin{array}{rcl}
\frac{d}{d \tau} \bma z \\[2mm] \mu \ema & \!=\! & \bma 0 & -C^\top \\[2mm] C & 0 \ema \bma q \\[2mm] \lambda \\[2mm]  \ema  - \bma \frac{\partial V}{\partial q}(q) - \frac{\partial V}{\partial q}(q_*)\\[2mm] 0 \ema \\[3mm]
 && + \; G(z,\mu) u \\[2mm]
y & = & G^\top (z,\mu) \bma \frac{\partial \widetilde{H}}{\partial z}(z,\mu) \\[2mm] \frac{\partial \widetilde{H}}{\partial \mu}(z,\mu) \ema,
\end{array}
\eq
for a suitably dimensioned matrix $G(z,\mu)$. Using compositionality of incremental port-Hamiltonian systems \cite{camlibel2, jeltsema}, \eqref{eq:primaldualstaticuy} can be \emph{interconnected} to other port-Hamiltonian systems, resulting in an interconnected port-Hamiltonian system; see \cite{stegink1,stegink2} for an application. 
%In fact, this was the original scenario studied in \cite{stegink1,stegink2}, where market dynamics was coupled to the port-Hamiltonian model of a power network and stability was shown using the sum of the Hamiltonians. 
This can be also used for \emph{distributed} optimization \cite{camlibel2}.
\begin{remark}\label{rem}
{\rm
All this can be generalized in several ways. First of all, next to equality constraints $Cq=b$ one can also include convex \emph{inequality} constraints within the incremental port-Hamiltonian formulation \cite{stegink1, stegink2}. Furthermore, the use of weighting matrices $Q,\Lambda$ can be generalized by considering functions $M(q,\lambda)$ with positive definite Hessian, and then replacing the Hamiltonian \eqref{eq:Hamiltonianstatic} by the Legendre transform $H(z,\mu):=M^*(z, \mu)$, where $z=\frac{\partial M}{\partial q}(q,\lambda), \mu=\frac{\partial M}{\partial \lambda}(q,\lambda)$, and considering instead of \eqref{eq:Hamiltonianstaticshifted} the shifted version of $H$ with respect to $z_*,\mu_*$; see \cite{passivitybook, stegink1, jeltsema}. Another generalization is to drop the assumption of differentiability of the convex function $V$, by replacing gradients by subdifferentials; see e.g. \cite{camlibel2}.
}
\end{remark}

\section{The primal-dual gradient algorithm for optimal control}
Consider now the \emph{optimal control} problem of minimizing over the vector-valued function $u(t), t \in [0,T],$ a cost criterion of the form
\bq
\int_0^T K(x(t),u(t)) dt,
\eq
where $T>0$ and $K(x,u)$ is a strictly convex function,
for a linear system dynamics
\bq
\dot{x}=Ax + Bu, \; x(0)=x_0, \quad x\in \mR^n, u\in \mR^m.
\eq
As is well-known, a solution to this problem is provided by Pontryagin's Minimum principle. Define the \emph{optimal control Hamiltonian} $H_{\rm{opt}}(x,u,p):= p^\top \left(Ax + Bu\right) + K(x,u)$. Then the first-order optimality conditions for an optimal solution $(x_*(t),u_*(t), p_*(t)), t \in [0,T],$ are given as
\bq
\label{eq:pont}
\begin{array}{rcl}
\dot{x}_* & = & \frac{\partial H_{\rm{opt}}}{\partial p}(x_*,u_*,p_*) = Ax_* + Bu_*, \quad x_*(0)=x_0 \\[2mm]
0 & = & \frac{\partial H_{\rm{opt}}}{\partial u}(x_*,u_*,p_*) = B^\top p_* + \frac{\partial K}{\partial u}(x_*,u_*) \\[2mm]
\dot{p}_* & = & -\frac{\partial H_{\rm{opt}}}{\partial x}(x_*,u_*,p_*) \\[2mm]
 & = & -A^\top p_* - \frac{\partial K}{\partial x}(x_*,u_*),  \quad p_*(T)=0.
\end{array}
\eq
In fact, due to strict convexity of $K$ and linearity of the system dynamics, it follows from Mangasarian's sufficiency theorem, see e.g. \cite{meinsma}, that $u_*(t), t \in [0,T],$ is the optimal control.

Alternatively, see e.g. \cite{meinsma}, by interpreting the system dynamics $\dot{x}=Ax + Bu$ as \emph{constraints} $Ax(t) + Bu(t) - \dot{x}(t)=0, t \in [0,T]$, the equations \eqref{eq:pont} arise as Euler-Lagrange equations for the calculus of variations problem of minimizing (over the functions $x,p,u$) the integral $\int_0^T F(x(t),\dot{x}(t),p(t),u(t)) dt$, with $F$ given as
\bq
F(x,\dot{x},u,p):= p^\top (Ax + Bu - \dot{x}) + K(x,u).
\eq
Here the time-functions $p:[0,T] \to \mR^n$ are infinite-dimensional Lagrange multipliers corresponding to the constraints $Ax(t) + Bu(t) - \dot{x}(t)=0, t \in [0,T]$. In analogy with the static optimization case this leads to the following \emph{infinite-dimensional} primal-dual gradient algorithm in continuous time. Define the functional
\bq
\cL(x,u,p):= \int_0^T F(x(t),\dot{x}(t),u(t),p(t)) dt,
\eq
depending on the time-functions $x(t),u(t), p(t), t \in [0,T]$. Then the continuous time primal-dual gradient algorithm in the new time-variable $\tau \in [0,\infty]$ is given by the dynamics
\bq
\label{eq:primaldualinf}
\frac{\partial}{\partial \tau} \bma Xx \\[2mm] Uu \\[2mm] Pp \ema =
\bma - \frac{\delta \cL}{\delta x} \\[2mm] - \frac{\delta \cL}{\delta u} \\[2mm]  \frac{\delta \cL}{\delta p} \ema,
\eq
for positive definite weighting matrices $X,U,P$. 
Here $\frac{\delta \cL}{\delta x}, \frac{\delta \cL}{\delta u}, \frac{\delta \cL}{\delta p}$ denote the \emph{variational derivatives} of the functional $\cL$ with respect to the time-functions $x(t),u(t),p(t), t \in [0,T]$. Note that the equations \eqref{eq:primaldualinf} involve 'physical' time $t \in [0,T]$, as well as 'algorithmic' time $\tau \in [0,\infty)$. So the functions $x,u,p$ actually depend on both, i.e., $x(t,\tau),p(t,\tau),u(t, \tau), t\in [0,T], \tau \in [0,\infty)$. (But variational derivatives are taken with respect to the functions of time $t$; for \emph{any} algorithmic time $\tau$.)

Since no time-derivatives (with respect to $t$) of $u$ and $p$ appear in $\cL$, the variational derivatives $\frac{\delta \cL}{\delta u}, \frac{\delta \cL}{\delta p}$ are simply $\frac{\partial F}{\partial u}= B^\top p(t) + \frac{\partial K}{\partial u}(x(t),u(t))$, respectively $\frac{\partial F}{\partial p}=Ax(t) + Bu(t) -\dot{x}(t), t \in [0,T]$. On the other hand, the variational derivative of $\int_0^T-p^\top (t) \dot{x}(t) dt$ with respect to the function $x(t), t \in [0,T],$ is computed as $\dot{p}(t), t \in [0,T]$ (note the cancelling of the minus sign due to partial integration). Therefore $\frac{\delta \cL}{\delta x} = \dot{p}(t) + A^\top p(t) + \frac{\partial K}{\partial x}(x(t),u(t)), t \in [0,T]$. Thus the infinite-dimensional primal-dual gradient algorithm \eqref{eq:primaldualinf} takes the explicit form
\bq
\label{eq:pde}
\frac{\partial}{\partial \tau} \bma Xx \\[2mm] Uu \\[2mm] Pp \ema \! = \!
\bma 
0 & 0 & - \frac{\partial}{\partial t} - A^\top \\[2mm]
0 & 0 & -B^\top \\[2mm]
- \frac{\partial}{\partial t} + A & B & 0
\ema \!
\bma x \\[2mm] u \\[2mm] p \ema -
\bma \frac{\partial K}{\partial x}(x,u) \\[2mm] \frac{\partial K}{\partial u}(x,u) \\[2mm] 0 \ema
\eq
Here the differential operator 
\bq
\label{eq:J}
J:= \bma 
0 & 0 & - \frac{\partial}{\partial t} - A^\top \\[2mm]
0 & 0 & -B^\top \\[2mm]
- \frac{\partial}{\partial t} + A & B & 0
\ema,
\eq
is \emph{formally skew-adjoint}. Hence the right-hand of \eqref{eq:pde} involves the sum of a formally skew-adjoint operator and the cyclically monotone operator
\bq
\bma x \\u \\ p \ema \mapsto \bma \frac{\partial K}{\partial x}(x,u) \\[2mm] \frac{\partial K}{\partial u}(x,u) \\[2mm] 0 \ema,
\eq
together defining a \emph{monotone} operator. 

Now, similarly to the static case, introduce the new functions
\bq
z(t,\tau):= X x(t,\tau), \, v(t, \tau):=U u(t,\tau), \, r(t, \tau):= P p(t, \tau)
\eq
Furthermore, in analogy with \eqref{eq:Hamiltonianstatic}, define for every $\tau \in [0,\infty)$ the \emph{Hamiltonian functional}
\bq
\label{eq:Hamiltonianinf}
\begin{array}{l}
\cH(z,v,r) :=  \frac{1}{2} \int_0^T  \left[z^\top (t,\tau) X^{-1} z(t,\tau) \, + \right.\\[2mm]
  \qquad  \left. v^\top (t,\tau) U^{-1}v (t,\tau) + r(t,\tau)^\top P^{-1} r(t,\tau) \right] dt
\end{array}
\eq
Then, since 
%\bq
%\begin{array}{rcl}
$\frac{\delta \cH}{\delta z} = X^{-1} z(t,\tau)=x(t,\tau),$$ \frac{\delta \cH}{\delta v} = U^{-1} v(t,\tau)=u(t,\tau), 
$$ \frac{\delta \cH}{\delta r} = P^{-1} r(t,\tau)=p(t,\tau),$
%\end{array}
%\eq
it follows that \eqref{eq:pde} defines an \emph{infinite-dimensional incremental port-Hamiltonian system} \cite{camlibel1,camlibel2}. 
\begin{remark}
{\rm
Refer to \cite{gernandt} for specification of suitable domains in which the operator $J$ given in \eqref{eq:J} becomes skew-adjoint. (Actually in \cite{gernandt} a slightly more involved version of $J$ is studied.)
}
\end{remark}
In analogy with \eqref{eq:Hamiltonianstaticshifted}, let
$z_*(t):= Xx_*(t), v_*(t):=Uu_*(t), r_*(t):=Pp_*(t)$, and define  the \emph{shifted} Hamiltonian functional
\bq
\label{eq:Hamiltonianinfshifted}
\begin{array}{rcl}
\widetilde{\cH}(z,v,r) &=&  \frac{1}{2} \int_0^T  (z (t,\tau)- z_*(t))^\top X^{-1} (z(t,\tau)- z_*(t))  
\\[2mm]
&&+ \, (v (t,\tau) - v_*(t))^\top U^{-1} (v (t,\tau) - v_*(t))\\[2mm]
&& +\, (r(t,\tau) - r_*(t))^\top P^{-1} (r(t,\tau) -r_*(t)) \, dt.
\end{array}
\eq
Furthermore, note that $(x_*,u_*,p_*)$ satisfies \eqref{eq:pont} (and thus corresponds to the solution of the optimal control problem) if and only if the right-hand side of \eqref{eq:pde} is zero for $(x_*,u_*,p_*)$ (i.e., $(x_*,u_*,p_*)$ is an equilibrium of \eqref{eq:pde} with respect to algorithmic time $\tau$). It follows that the incremental port-Hamiltonian system \eqref{eq:pde} can be rewritten as
\bq
\label{eq:primaldualinfshifted}
\begin{array}{rcl}
\frac{\partial}{\partial \tau} \bma z \\[2mm] v \\[2mm] r \ema & = &
\bma 
0 & 0 & - \frac{\partial}{\partial t} - A^\top \\[2mm]
0 & 0 & -B^\top \\[2mm]
- \frac{\partial}{\partial t} + A & B & 0
\ema
\bma  \frac{\delta \widetilde{\cH}}{\delta z} \\[2mm]  \frac{\delta \widetilde{\cH}}{\delta v}\\[2mm] \frac{\delta \widetilde{\cH}}{\delta r} \ema \, -
\\[10mm]
&&
\bma \frac{\partial K}{\partial x}(x,u) - \frac{\partial K}{\partial x}(x_*,u_*)\\[2mm] \frac{\partial K}{\partial u}(x,u) - \frac{\partial K}{\partial u}(x_*,u_*) \\[2mm] 0 \ema.
\end{array}
\eq
This constitutes an \emph{infinite-dimensional port-Hamiltonian system} as defined in \cite{JGP}. Compared to the interpretation of the examples in \cite{JGP} the 'physical time' $t$ replaces the spatial variable, while instead the 'algorithm time' $\tau$ becomes the time variable. Note that the formally skew-adjoint operator \eqref{eq:J} defines the lossless part of this infinite-dimensional port-Hamiltonian system, while the last part of \eqref{eq:primaldualinfshifted} corresponds to 'energy dissipation'. 

In analogy with \eqref{eq:staticdissipation}, we obtain by formal skew-adjointness of the operator \eqref{eq:J} the inequality
\bq
\label{eq:infdissipation}
\begin{array}{rcl}
\frac{d}{d\tau} \widetilde{\cH} & = &- \int_0^T (z(t,\tau)-z_*(t))^\top X^{-1} \\[2mm]
&& [\frac{\partial K}{\partial x}(x(t.\tau),u(t,\tau)) - \frac{\partial K}{\partial x}(x_*(t),u_*(t))] \, dt   \\[2mm]
& - &  \int_0^T (v(t,\tau)-v_*(t))^\top U^{-1} \\[2mm]
&& [\frac{\partial K}{\partial u}(x(t,\tau),u(t,\tau)) - \frac{\partial K}{\partial u}(x_*(t),u_*(t))] \, dt   
\\[2mm]
& - & (z(t,\tau) -z_*(t))^\top X^{-1}P^{-1} (r(t,\tau) -r_*(t))|_{t=0}^{t=T} \\[2mm]
& = & - \int_0^T (x(t,\tau)-x_*(t))^\top \\[2mm]
&& [\frac{\partial K}{\partial x}(x(t,\tau),u(t,\tau)) - \frac{\partial K}{\partial x}(x_*(t),u_*(t))]  \, dt   \\[2mm]
& - & \int_0^T (u(t,\tau)-u_*(t))^\top \\[2mm]
&& [\frac{\partial K}{\partial u}(x(t,\tau),u(t,\tau)) - \frac{\partial K}{\partial u}(x_*(t),u_*(t))]  \, dt \\[2mm]
& - & (x(t,\tau) - x_*(t))^\top  (p(t,\tau) -  p_*(t))|_{t=0}^{t=T}   \\[2mm]
& \leq & (x(0,\tau) - x_0)^\top  (p(0,\tau) -p_*(0)) \\[2mm]
& & -\, (x(T,\tau) - x_*(T))^\top  p(T,\tau),
\end{array}
\eq
by convexity of $K(x,u)$ and $x_*(0)=x_0, p_*(T)=0$. This suggests to impose the boundary conditions on \eqref{eq:primaldualinfshifted}
\bq
\label{eq:bound}
x(0,\tau)=x_0, \; p(T,\tau)=0,
\eq
%%are satisfied for all $\tau \geq 0$ then $\frac{d}{d\tau} \widetilde{\cH} \leq 0$ for all $\tau \geq 0$. 
in line with the mixed initial and final conditions of Pontryagin's Minimum principle \eqref{eq:pont}.

In order to assess the \emph{asymptotic stability} of the equilibrium $(z_*,v_*,r_*)$ of \eqref{eq:primaldualinfshifted}, and therefore $(x_*,u_*,p_*)$ of \eqref{eq:pde}, we perform the following LaSalle invariance principle analysis.
\begin{proposition}
\label{prop:lasalle}
Consider \eqref{eq:primaldualinfshifted} with boundary conditions \eqref{eq:bound}. Assume the pair $(A,B)$ is controllable. Then the largest invariant set contained in $\{ (z(t,\tau),v(t,\tau),r(t,\tau)), t \in [0,T] \mid \frac{d}{d\tau} \widetilde{\cH} = 0 \}$ is equal to the single function $(z_*(t),v_*(t),r_*(t)), t \in [0,T]$.
\end{proposition}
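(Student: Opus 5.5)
Under the boundary conditions \eqref{eq:bound} the boundary term in \eqref{eq:infdissipation} vanishes. Indeed, $x(0,\tau)=x_0=x_*(0)$ and $p(T,\tau)=0=p_*(T)$. What remains for $\frac{d}{d\tau}\widetilde{\cH}$ are the two integrals involving the monotone map $(x,u)\mapsto(\frac{\partial K}{\partial x},\frac{\partial K}{\partial u})$. Their sum equals $-\int_0^T w^\top[\nabla K(x,u)-\nabla K(x_*,u_*)]\,dt$ with $w=(x-x_*,u-u_*)$. Since $K$ is strictly convex, the integrand is nonnegative, and it is zero only where $(x,u)=(x_*,u_*)$. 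So the first step is to show that $\frac{d}{d\tau}\widetilde{\cH}=0$ forces $x(t,\tau)=x_*(t)$ and $u(t,\tau)=u_*(t)$ for (almost) all $t\in[0,T]$. This is the exact analogue of the first line of the proof of Proposition \ref{prop}. Note that $p$ is left free by this condition.

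Next I take a solution of \eqref{eq:primaldualinfshifted} that stays in this set for all $\tau$. Then $x\equiv x_*$ and $u\equiv u_*$ for all $\tau$, so $\partial_\tau z=0$ and $\partial_\tau v=0$. I set $e:=p-p_*$. Substituting into \eqref{eq:primaldualinfshifted}, the dissipative terms drop out and the first two block rows give
\[
-\tfrac{\partial}{\partial t}e-A^\top e=0,\qquad B^\top e=0,\qquad t\in[0,T].
\]
The third row gives $\partial_\tau r=(-\partial_t+A)(x-x_*)+B(u-u_*)=0$, which is consistent and imposes no extra condition. The boundary condition gives $e(T,\tau)=0$. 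The first equation is a linear ODE in $t$ with terminal value zero, so $e(t,\tau)=e^{A^\top(T-t)}e(T,\tau)=0$. This already yields $p=p_*$.

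Should the terminal condition be unavailable, say because the set is taken before imposing \eqref{eq:bound} on $p$, I would still use the backup argument. From $e(t)=e^{A^\top(T-t)}e(T)$ and $B^\top e(t)=0$ on $[0,T]$ we get $B^\top e^{A^\top s}e(T)=0$ for all $s\in[0,T]$. Differentiating repeatedly at $s=0$ gives $e(T)^\top A^kB=0$ for all $k$. Controllability of $(A,B)$ then forces $e(T)=0$, hence $e\equiv0$. This is where the controllability hypothesis enters, and I would present the argument so that it covers both readings.

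The main obstacle is not the algebra but the rigor. One point is going from $\frac{d}{d\tau}\widetilde{\cH}=0$ to pointwise equality, which requires continuity or almost-everywhere arguments in $t$ together with strict monotonicity of $\nabla K$. Another is making sense of invariance for the PDE on an appropriate function space, with domains as in \cite{gernandt}. I would state the result formally, for sufficiently smooth solutions, and conclude that the largest invariant set is $\{(z_*,v_*,r_*)\}$.
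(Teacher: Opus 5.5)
Your proof is correct. Its first step matches the paper's: you use \eqref{eq:bound} to remove the boundary term and strict monotonicity of $\nabla K$ to get $x=x_*$, $u=u_*$, and then derive $\left(\frac{\partial}{\partial t}+A^\top\right)(p-p_*)=0$ and $B^\top(p-p_*)=0$.

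The last step is where you differ. The paper does not use the terminal condition on $p$ there. It concludes $p=p_*$ from observability of $(B^\top,A^\top)$, that is, from controllability of $(A,B)$. That is exactly your backup argument.

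Your primary argument instead feeds $p(T,\tau)=0=p_*(T)$ into the backward linear ODE and concludes $e\equiv 0$ by uniqueness. This shows something slightly stronger than the statement. Once \eqref{eq:bound} is imposed, the controllability hypothesis is superfluous for this proposition, and the condition $B^\top(p-p_*)=0$ is not even needed.

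The paper's route has its own advantage: the invariance step does not depend on the terminal condition for $p$. That mechanism would still apply if the condition $p(T,\tau)=0$ were weakened or replaced, for instance in the boundary-control setting of the later remark. In that case, though, your first step would also have to be revisited, because the boundary term in \eqref{eq:infdissipation} would no longer vanish automatically.

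Both arguments are equally formal about function spaces and about pointwise versus almost-everywhere equality, which the paper also sets aside.
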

{\it Proof}
First of all, by strict convexity of $K(x,u)$
\bq
\begin{array}{l}
\{ (z(t,\tau),v(t,\tau),r(t,\tau)), t \in [0,T] \mid \frac{d}{d\tau} \widetilde{\cH} = 0 \}= \\[2mm] \{ (z(t,\tau),v(t,\tau),r(t,\tau)) \mid z(t,\tau)=z_*(t), v(t,\tau)=v_*(t), \\[2mm]
\quad t \in [0,T] \}.
\end{array}
\eq
Substitution of $z(t,\tau)=z_*(t), v(t,\tau)=v_*(t)$ into \eqref{eq:primaldualinfshifted} implies that $p(t,\tau)$ is independent of $\tau$, while this $p(t),t \in [0,T],$ satisfies
\bq
\left(\frac{d}{dt} + A^\top \right) (p(t) - p_*(t)) = 0, \; B^\top (p(t) - p_*(t)) =0.
\eq
By controllability of $(A,B)$ (and thus observability of $(B^\top,A^\top)$) this implies $p(t) = p_*(t), t \in [0,T]$.
${}_\blacksquare$

\smallskip

In analogy with Proposition \ref{prop} it is thus expected that under the conditions of Proposition \ref{prop:lasalle} and for a suitable choice of function spaces for $z(t,\tau),v(t,\tau),p(t,\tau), t \in [0,T], \tau \geq 0,$ and ensuing technical conditions, a (weak) version of LaSalle's invariance principle will show convergence to the optimal control solution $x_*(t),u_*(t),p_*(t), t \in [0,T],$ given by \eqref{eq:pont}.
%\begin{remark}
%{\rm
%Instead of \eqref{eq:bound} one could also consider the alternative boundary conditions
%\bq
%\begin{array}{rcll}
%\label{eq:boundcontrol}
%p(0,\tau) - p_*(0) & = & - D_0 (x(0,\tau) - x_*(0)), \quad & D_0=D_0^\top \geq 0, \; \tau\geq 0 \\[2mm]
%x(T,\tau) - x_*(T) & = & - D_T (p(T,\tau) - p_*(T)), \quad & D_T=D_T^\top \geq 0, \; \tau\geq 0
%\end{array}
%\eq
%Substituting into \eqref{eq:infdissipation} yields
%\bq
%\begin{array}{ll}
%\frac{d}{d\tau} \widetilde{\cH} \leq & -  (x(0,\tau) - x_*(0))^\top D_0 (x(0,\tau) - x_*(0)) \\[2mm]
%& - (p(T,\tau) - p_*(T))^\top D_T (p(T,\tau) - p_*(T)) \leq 0.
%\end{array}
%\eq
%Hence if $D_0>,D_T>0$ this implies the extra invariance conditions $x(0,
%\end{remark}
\begin{remark}
{\rm 
If the boundary conditions \eqref{eq:bound} are \emph{not} satisfied then \eqref{eq:primaldualinfshifted} amounts to a \emph{boundary control} port-Hamiltonian system as defined in \cite{JGP}, with boundary variables $x(0,\tau) - x_0,  p(0,\tau)-p_*(0), x(T,\tau) - x_*(T), p(T,\tau)$. In this case, as detailed in \cite{JGP}, the formally skew-adjoint operator $J$ defines an infinite-dimensional Dirac structure (called \emph{Stokes-Dirac structure}).
}
\end{remark}
\begin{remark}
{\rm 
In analogy with Remark \ref{rem} several generalizations are worth to be explored. An obvious one concerns the use of positive operators instead of the positive definite matrices $X,U,P$. Furthermore, it is of interest to include state and input constraints into the optimal control problem and the ensuing primal-dual gradient algorithm; see also \cite{lefevre}. Finally, similarly to \eqref{eq:primaldualstaticuy}, one may also augment the incremental port-Hamiltonian pde system \eqref{eq:pde} with \emph{inputs} and \emph{outputs}, for purposes of control by interconnection and distributed optimal control. 
}
\end{remark}

\section{Conclusions}
In analogy with the use of continuous time primal-dual gradient methods for static constrained convex optimization, this note has taken the basic steps in the explicit formulation and analysis of PDGCT for optimal control problems with linear dynamics and convex cost; taking inspiration from \cite{gernandt}. The resulting dynamics has been shown to be a system of port-Hamiltonian pde's, involving ordinary physical time and algorithmic time. This leads to several research questions, one being the design of \emph{sub-optimal} control strategies that are directly based on numerical schemes for the system of pde's \eqref{eq:pde}.

\end{document}